\crefname{equation}{}{}
\crefname{lem}{Lemma}{Lemmas}
\crefname{thm}{Theorem}{Theorems}
\newcommand{\snmii}[1]
{
  \left\vert\kern-0.25ex
  \left\vert\kern-0.25ex
  \left\vert
  #1
  \right\vert\kern-0.25ex
  \right\vert\kern-0.25ex
  \right\vert
}
\newtheorem{assumption}{Assumption}
\newtheorem{lemma}{Lemma}[section]
\newtheorem{remark}{Remark}[section]
\newtheorem{theorem}{Theorem}[section]
\newtheorem{example}{Example}[section]
\numberwithin{equation}{section}
\title{A Nitsche-eXtended finite element method 
for distributed optimal control problems of elliptic interface equations
\thanks
{
	This work was supported by  National Natural Science Foundation of China (11771312).
}}
\author{
	 \ Tao Wang  \thanks{Email: wangtao5233@hotmail.com }, Chaochao Yang \thanks{Email: yangchaochao9055@163.com}, \
	Xiaoping Xie \thanks{Corresponding author. Email: xpxie@scu.edu.cn} \\
	{School of Mathematics, Sichuan University, Chengdu 610064, China}
}
\begin{document}
	\maketitle

	\begin{abstract}
	
	This paper analyzes an  interface-unfitted numerical method for  distributed optimal control problems governed by elliptic interface equations. We follow the variational
discretization concept to discretize the optimal control problems, and apply a  Nitsche-eXtended  finite element method 
to discretize the corresponding state and adjoint equations, where piecewise cut basis functions around the interface
are enriched into the standard linear  element space. 
Optimal error estimates of the state, co-state and control in a mesh-dependent norm and the $L^2$ norm are derived.  Numerical results are provided to verify the theoretical results.

	\end{abstract}	
\noindent\textbf{Keywords:} distributed optimal control, elliptic interface equation, variational discretization concept, interface-unfitted finite element method.

	\section{Introduction}
	Optimization processes in multi-physics progress or engineering design with different materials usually lead to 
	optimal control problems
governed by  partial differential  equations with interfaces. 
%
%
	In this paper, we consider the following distributed  optimal control problem: 
	\begin{equation}\label{eqobjective}
	\text{min}~ J(y, u):=\frac{1}{2}\int_\Omega (y-y_d)^2~dx+\frac{\nu}{2}\int_\Omega u^2~dx
	\end{equation}
	for $ (y,u)\in H^1_0(\Omega)\times L^2 (\Omega) $ subject to the elliptic interface problem 
	\begin{equation}\label{eqstrongstate0}
	\left\{
	\begin{array}{rll}
	& -\nabla\cdot(\alpha(x)\nabla y)=f+u, & \text{ in }\Omega \\
	& y=0, & \text{ on }\partial\Omega \\
	& [y]=0,[\alpha\nabla_n y ]=g, & \text{ on }\Gamma\\
	\end{array}
	\right.
	\end{equation}
	with the control constraint
	\begin{equation}\label{eqconstraint}
	u_0\leq u \leq u_1, \text{ a.e. in } \Omega.
	\end{equation}
Here 
$\Omega\subseteq\mathbb{R}^d (d=2,3)$ is  a polygonal or polyhedral domain, consisting of two disjoint subdomains $\Omega_i (1\leq i\leq 2)$, and interface $\Gamma=\partial\Omega_1\cap \partial\Omega_2$.
 $y_d\in L^2(\Omega)$ is the desired state to be achieved by controlling  $u$, and $\nu$ is a positive constant. $\alpha(x)$ is   piecewise   constant with
  $\alpha|_{\Omega_i}=\alpha_i>0$ for $i=1,2$,
$[y]:=(y|_{\Omega_1})|_\Gamma-(y|_{\Omega_2})|_\Gamma$  is  the jump  of function $y$ across interface $\Gamma$,  $\textbf{n}$ is the 
  unit  
 normal vector along $\Gamma$ pointing to $\Omega_1$, $\nabla_\textbf{n} y=n\cdot\nabla y$  is the normal derivative of $y$, $f \in L^2(\Omega)$, 
$g\in L^{2} (\Gamma)$, 
 and  $u_0, u_1 \in L^2 (\Omega)$ with $u_0\leq u_1$ a.e. in $\Omega$. The choice of homogeneous boundary condition on boundary $\partial\Omega$ is made for ease of presentation, since similar results are valid for other boundary conditions.
 	
	For an elliptic interface problem, it is well-known that its solution 
	is generally not in $H^2(\Omega)$  due to the discontinuity of coefficient. This low regularity may lead to   reduced accuracy for numerical approximations $\cite{Babu1970The,Xu1982Estimate}$.  In literature there are usually two types of methods to improve the numerical accuracy,  interface(or body)-fitted methods \cite{Barretts87Fit, Brambles96fin, Chen98, Plums03opt, Lis10opt, Cai17dis} and interface-unfitted methods.  For the interface-fitted methods, meshes aligned with the interface are used so as to dominate the approximation error caused by the non-smoothness of solution. 
	 However, it is often difficult or expensive to generate complicated interface-fitted meshes, especially when the interface is moving with time or iteration. 
	
	In contrast with  the interface-fitted methods, the interface-fitted methods, with   certain types of  modification for approximating functions around the interface, can avoid using the interface-fitted meshes. One typical type of interface-unfitted methods is the extended/generalized finite element method (\quad XFEM/GFEM) (cf. \cite{Babu1994Special,Strouboulis2000The,I2012Stable,moes1999a,Nicaise2011Optimal}),   where additional basis functions characterizing the singularity of solution around the interface are enriched into the corresponding approximation space. We 
refer to 
\cite{Kergrene2016Stable,Soghrati2012An,Cheng2010Higher} for the numerical simulation of XFEM/GFEM for some elliptic interface problems.
The immersed finite element method (IFEM) 
(cf. \cite{Camps06qua, Li06imm, Lins07err,Yan2007Immersed}) is another typical type of  interface-unfitted methods, where   special finite element basis  functions  are constructed to satisfy the homogeneous interface jump conditions in a certain sense.  We note   that it is usually not  easy to extend the IFEM to the case of non-homogeneous interface conditions \cite{Yan2007Immersed,He2011Immersed, Han2016A} and,  as pointed out in    \cite{Lin2015Partially}, the classic IFEM may lead to deteriorate accuracy, while partially penalized  IFEMs,  with extra stabilization terms introduced
at interface edges for penalizing the discontinuity in IFE functions,  are   optimally
convergent. 
	
	In  \cite{Hansbo2002An}, a special XFEM with  optimal convergence  was proposed for the elliptic interface problems. This method, called  Nitsche-XFEM,  combines the idea of XFEM  with Nitsche's approach  \cite{Nitsche71ube}, where
	   additional cut basis functions which are discontinuous across the interface are added into   the standard linear finite element space,    and  the   parameters in the Nitsche's numerical fluxes on each  element intersected by the interface are chosen to depend on the relative area/volume of the two parts aside the interface.  For the development of interface-unfitted methods using additional cut basis functions, we refer to   \cite{ Becker2009A, Barrau2012A,Lehrenfeld2012Analysis, Hansbo14cut, Burman15cut, Wang2016}. 
	
For optimal control problems governed by elliptic equations  with smooth coefficients,   a lot of work on finite element methods can be found in literature; see \cite{Hinze05var,Chens10err,Yang2018,Wengs16sta,Yang17} for control constraints, see \cite{Lius09new, Hintermuller10goa,Roschs17rel} for state constraints, see \cite{Beckers00ada,Gongs16ada,Lis02ada,Schneiders16pos} for adaptive convergence analysis. 
However, there are only  limited papers on the numerical analysis   for  optimal control problems of elliptic interface equations.   In \cite{Zhang2015Immersed}    the classic IFEM was  applied to discretize the model \eqref{eqobjective}-\eqref{eqconstraint} with the homogeneous interface jump condition $g=0$.  
In \cite{wachsmuth2016optimal},   $hp$-finite elements were investigated for  the optimal control problems of  elliptic interface equations on interface-fitted meshes.
In a very recent work \cite{Yang2018},   the Nitsche-XFEM  was applied for interface optimal control problems of  elliptic interface equations and shown to have  optimal convergence. 
	
	
%

In this paper, we shall follow the  variational discretization concept and  apply the Nitsche-XFEM   for the numerical solution of the  distributed  optimal control problem \eqref{eqobjective}-\eqref{eqconstraint}.  
%
 Optimal error estimates will be derived for the state, co-state and control on  meshes  independent of the interface.  
	
	The remainder of the paper is organized as following. Section 2 introduces some notations and the  optimality conditions for the optimal control problem.  Section 3 gives a brief introduction for Nitsche-XFEM and several theoretical results associated with this method. In section 4, we discretize the optimal control problem, show its discrete optimality conditions, and derive error estimates  for the state, co-state and control of the optimal control problem. Section 5  describes an iteration algorithm for the discrete system, and Section 6 provides several numerical examples to verify our theoretical results.  Finally,  Section 7 gives concluding remarks.

	\section{Notation and optimality conditions}
	For any bounded domain $\Lambda \subset  \mathbb{R}^d$ and non-negative integer $j$, let $H^{j}(\Lambda)$ and $H^{j}_0(\Lambda)$  denote the standard    Sobolev spaces on $\Lambda$ with    norm $\|\cdot\|_{j, \Lambda}$ and semi-norm $|\cdot|_{j,\Lambda}$. In particular, $L^2(\Lambda):=H^0(\Lambda)$, with  the standard $L^2$-inner product $(\cdot,\cdot)_\Lambda$. When $\Lambda=\Omega$, we use abbreviations $\|\cdot\|_{j}:=\|\cdot\|_{j, \Omega}$,  $|\cdot|_{j}:=|\cdot|_{j, \Omega}$, and $(\cdot,\cdot):=(\cdot,\cdot)_\Omega$.  
We also need the fractional Sobolev space
$$H^{j+\frac{1}{2}} (\Lambda) := \{w\in H^j (\Lambda): \sum_{|\alpha|=j}\iint_{\Lambda\times\Lambda} \frac{|D^\alpha w(s)-D^\alpha w(t)|^2}{|s-t|^{d+1}}~dsdt <\infty\}$$
with  norm
$$\|w\|_{j+\frac{1}{2},\Lambda} :=\left(\|w\|_{j,\Lambda}^2+\sum_{|\alpha|=j} \iint_{\Lambda\times\Lambda} \frac{|D^\alpha w(s)-D^\alpha w(t)|^2}{|s-t|^{d+1}}~dsdt \right)^{\frac12}.$$
For $s\in \mathbb{R}^+$, we define
$$ H^s(\Omega_1\cup\Omega_2):=\left\{w\in L^2(\Omega):\ w|_{\Omega_i} \in H^s(\Omega_i),\ i=1,2   \right\}$$
with norm $$\|\cdot\|_{s,\Omega_1\cup\Omega_2}:=\left(\sum\limits_{i=1}^2 \|\cdot\|_{s,\Omega_i}^2 \right)^{\frac{1}{2}}.$$

Throughout this paper, we use $\bar{a}\lesssim \bar{b}$ to denote $\bar{a}\leq C\bar{b}$, where  $C$ is  a generic positive constant $C$  independent of the mesh parameter  $h$  and the location of the interface relative to the corresponding mesh. 

%
The weak formulation of state equation \eqref{eqstrongstate} is as follows:  find $y \in H^1_0(\Omega)$ such that
\begin{equation}
\label{eqweakform}
a(y,v)=(u+f,v)+(g,v)_{\Gamma}, \quad \forall v \in H^1_0(\Omega),
\end{equation} 
where $a(y,v):=(\alpha \nabla y ,\nabla v)$.
It is easy to see that problem \eqref{eqweakform} admits   a unique solution. We  make the following regularity assumptions for the solution $y$.
\begin{assumption} \label{A1}It holds $y\in H_0^1 (\Omega)\cap H^{3/2} (\Omega_1 \cup \Omega_2)$ and
\begin{equation}
\label{regu1}
\|y\|_{\frac{3}{2},\Omega_1 \cup \Omega_2} \lesssim  \|u\|_0+\|f\|_0+\|g\|_{0,\Gamma}.
\end{equation}
In addition, if $g\in H^{1/2} (\Gamma)$, then   $y\in H_0^1 (\Omega)\cap H^2 (\Omega_1 \cup \Omega_2)$ and
\begin{equation}
\label{regu2}
\|y\|_{2,\Omega_1 \cup \Omega_2} \lesssim  \|u\|_0+\|f\|_0+\|g\|_{\frac12,\Gamma}.
\end{equation}
\end{assumption}

\begin{remark} \label{assumption-regularity}

We note  that the Assumption 1 is reasonable. In fact,  if   $\Omega$ and   $\Gamma$ are smooth with $\Gamma\cap \partial\Omega=\emptyset$, then the regularity \eqref{regu1} holds \cite[(2.2)]{Brambles96fin}.  And it has been shown in  \cite[Corollary 4.12]{wachsmuth2016optimal} that \eqref{regu1} holds if   $\Omega \subset \mathbb{R}^2$ and its subdomains $\Omega_i$ are all polygonal. As for the regularity \eqref{regu2}, if the domain $\Omega$ is convex, and the interface $\Gamma$ is $C^2$ continuous with $\Gamma\cap \partial\Omega=\emptyset$, then \eqref{regu2}   holds \cite[theorem 2.1]{Chen98}.
\end{remark}


	
	Define $$U_{ad}:=\{ u\in L^2(\Omega):u_{a}\leq u \leq u_{b} \ \text{a.e. in} \ \Omega \}.$$ By using the standard technique in $\cite{Tr2010Optimal}$, we can easily derive the optimality conditions for the optimal control problem \eqref{eqobjective}-\eqref{eqconstraint}.
	\begin{lemma} \label{leoptimalcondition}
		The optimal control problem  \eqref{eqobjective}-\eqref{eqconstraint} has a unique solution $(y,u)\in H_0^1(\Omega)\times U_{ad}$, and the equivalent optimality conditions read: the triple $(y,p,u)\in H_0^1(\Omega)\times H_0^1(\Omega)\times U_{ad}$ satisfies 
\begin{align}
& a(y,v)=(u+f,v)+(g,v)_{\Gamma},~~\forall v\in H_0^1(\Omega),\label{eqstate}\\
& a(v,p)=(y-y_d,v),~~\forall v\in H_0^1(\Omega),\label{eqcostate}\\
& (p+a u,v-u) \geq 0,~~\forall v\in U_{ad}.\label{eqprojection}
\end{align}
	\end{lemma}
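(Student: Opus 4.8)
The plan is to establish this lemma in three stages following the classical template for linear-quadratic optimal control with convex admissible set. First I would prove existence and uniqueness of the minimizer $(y,u)$. Since the control-to-state map $u\mapsto y(u)$ defined by \eqref{eqweakform} is affine and continuous from $L^2(\Omega)$ into $H^1_0(\Omega)$ (the well-posedness of the state problem was already noted), the reduced functional $j(u):=J(y(u),u)$ is continuous, strictly convex (by the $\frac{\nu}{2}\|u\|_0^2$ term with $\nu>0$), and coercive on $L^2(\Omega)$. The admissible set $U_{ad}$ is nonempty, closed, bounded and convex in $L^2(\Omega)$, hence weakly sequentially compact, so a standard argument (weak lower semicontinuity of $j$) yields a minimizer, and strict convexity gives uniqueness.

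Next I would derive the first-order necessary and sufficient optimality conditions. Because $U_{ad}$ is convex and $j$ is Gateaux differentiable and convex, the minimizer $u$ is characterized by the variational inequality $j'(u)(v-u)\ge 0$ for all $v\in U_{ad}$. The main computation is to identify $j'(u)$: differentiating the state equation gives that the derivative $\delta y$ of $y$ in direction $\delta u$ solves $a(\delta y,v)=(\delta u,v)$, and then one introduces the adjoint state $p\in H^1_0(\Omega)$ solving \eqref{eqcostate}, i.e. $a(v,p)=(y-y_d,v)$ for all $v$. Testing the adjoint equation with $v=\delta y$ and the sensitivity equation with $v=p$ gives $(y-y_d,\delta y)=a(\delta y,p)=(\delta u,p)$, so that $j'(u)\delta u=(y-y_d,\delta y)+\nu(u,\delta u)=(p+\nu u,\delta u)$. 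This yields exactly \eqref{eqprojection} (with the notation $a$ there standing for $\nu$), and conversely, convexity of $j$ makes this variational inequality sufficient, so the system \eqref{eqstate}--\eqref{eqprojection} is equivalent to optimality.

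Finally I would note that the coupled system \eqref{eqstate}--\eqref{eqprojection} has a unique solution $(y,p,u)$: uniqueness of $u$ is already established, and then $y$ and $p$ are uniquely determined as the solutions of the well-posed linear problems \eqref{eqstate} and \eqref{eqcostate}. I expect no serious obstacle here; the only mild subtlety is the bookkeeping between the bilinear form $a(y,v)=(\alpha\nabla y,\nabla v)$ being symmetric (so that $a(v,p)$ and $a(p,v)$ coincide, which is what makes the adjoint manipulation clean) and making the role of the adjoint variable precise in the non-self-adjoint-looking formulation \eqref{eqcostate}. Since $\alpha$ is scalar-valued, $a$ is symmetric and this causes no difficulty; one simply invokes the cited reference \cite{Tr2010Optimal} for the routine details.
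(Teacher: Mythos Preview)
Your proposal is correct and follows precisely the standard argument from \cite{Tr2010Optimal} that the paper invokes; the paper itself does not write out a proof but simply states that ``by using the standard technique in \cite{Tr2010Optimal}, we can easily derive the optimality conditions.'' Your sketch (strict convexity and coercivity of the reduced functional for existence/uniqueness, then the adjoint calculus to identify $j'(u)$ with $p+\nu u$, yielding the variational inequality) is exactly that technique, and you correctly observe that the symbol $a$ in \eqref{eqprojection} is a typo for $\nu$.
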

\begin{remark}
	$p$ in  \eqref{eqcostate} is called  the co-state or adjoint state. 
	In addition, by Assumption \ref{A1} we have 
	$$ \|p\|_{2,\Omega_1 \cup \Omega_2} \lesssim \|y\|_{0}+\|y_d\|_{0} . $$
\end{remark}
\begin{remark}\label{rem2.3}
		The variational inequality \eqref{eqprojection} means
	\begin{equation}\label{eqprojection2}
	u=P_{U_{ad}} \left (-\frac{1}{\nu} p \right),
	\end{equation} 
	where $P_{U_{ad}}$ is the $L^2$ projection onto $U_{ad}$. In particular, if $u$ is unconstrained, i.e. $U_{ad}=L^2(\Omega)$,  then  the relation \eqref{eqprojection2} is reduced to 
\begin{equation}\label{eqprojection-cont}
u=-\frac{1}{\nu} p.
\end{equation}
\end{remark}

	\section{Nitsche-XFEM for state and co-state equations}
	\subsection{Extended finite element space}
	
Let $\mathscr{T}_h$ be a shape-regular triangulation of $\Omega$ consisting of open triangles/tetrahedrons with mesh size $h=\max_{T\in \mathscr{T}_h}h_T$, where $h_T$ denotes the diameter of $T\in \mathscr{T}_h$.  We mention that  $\mathscr{T}_h$ is independent of the location of interface.

Define
\begin{align*}
&\mathcal{T}_h^{\Gamma}:=\{T\in \mathscr{T}_h:T\cap \Gamma \neq \emptyset\}. 
\end{align*}
For any   $T\in \mathcal{T}_h^{\Gamma}$,  called  an interface element, we  set $T_m:=T\cap \Omega_m (m=1,2),  \Gamma_T:=\Gamma\cap T$, and denote by $\Gamma_{T,h}$   the straight line/plane segment connecting the intersection between $\Gamma$ and $\partial T$.

For ease of discussion,   we make the following  standard assumptions   on $\mathscr{T}_h$ and   $\Gamma$ (cf. \cite{Hansbo2002An, Schott17sta}).
\begin{itemize}
\item[\bf{(A1)}.]  For $T\in \mathcal{T}_h^{\Gamma}$ and    an edge/face $F\subset \partial T$,    $\Gamma\cap F$ is simply connected. 
\item[\bf{(A2)}.] For $T\in \mathcal{T}_h^{\Gamma}$, there is  a smooth function $\psi$ which maps  $\Gamma_{T,h}$ onto $ \Gamma_T.$
\end{itemize}

\begin{remark}
We note that {\bf{(A1)}} is easily fulfilled for sufficiently fine meshes, and 
{\bf{(A2)}} requires   $\Gamma$ to be piecewise smooth.
\end{remark}

	Denote by $\Theta:=\{P_i:  i=1,2,\cdots,I\}$   the set of  all mesh points of  the triangulation $\mathcal{T}_h$, and by $\Theta_\Gamma:=\Theta\bigcap \bar{\mathcal{T}}_h^{\Gamma}$ the set of  all vertexes of the interface elements. Let $V_h^P$ be the standard linear finite element space with respect to the triangulation $\mathcal{T}_h$  with $\varphi_i\in V_h^P$ denoting the nodal basis function corresponding to the node $P_i$ for $i=1,2,\cdots,I$.  
	
	For any $P_i\in \Theta_\Gamma \bigcap  \Omega_m$ ($m=1,2$),  define  the cut basis function $\widetilde{ \varphi_{i} }$  by
	$$\widetilde{ \varphi_{i} } (x):= \left \{ \begin{array}{ll}
	0, & x\in   \Omega_m,\\
	\varphi_{i}(x), & x\in   \Omega \setminus \Omega_m. 
	\end{array} \right. 
	$$
Then we introduce the cut finite element space
	 $$V_h^\Gamma:=span\{\widetilde{ \varphi_{i} }:  P_i\in \Theta_\Gamma \setminus \Gamma
	 \},$$ 
and  define the extended finite element space  
 $$V_h:=\{v_h \in V_h^P \oplus V_h^\Gamma :v_h |_{\partial\Omega} =0 \}.$$ 
 It is easy to see that   for any $v_h \in V_h$, $v_h|_{\Omega_i}$ ($i=1,2$)  is piecewise linear and continuous, and $v_h$ is discontinuous across the interface $\Gamma$.

%
		
		\subsection{Formulations of Nitsche-XFEM}
		 
%
%
%
		
		
		To describe the Nitsche-XFEM, we first introduce some notations. For each  interface element $T\in \mathcal{T}_h^{\Gamma}$ and $m=1,2$,  we set $$T_m:=T\bigcap{\Omega_m}, \quad k_m:=\frac {|T_m|}{|T|},\ $$
where 	$|T_m|$ and $|T|$ denote the area/volume of $T_m$ and $T$ respectively.  It is evident that
$$k_1+k_2=1.$$
For $\phi\in V^h$, we set 
	$$ \phi_m:=\phi|_{\Omega_m},  \quad \{\phi\}:=(k_1\phi_1+k_2\phi_2)|_{\Gamma}.$$
Introduce the following bilinear form  $a_h(\cdot,\cdot)$: 
 for $w_h,v_h\in V_h$,
\begin{align}
	\label{eq:nfemsc}
a_h(w_h,v_h)&:=( \alpha \nabla w_h,\nabla v_h)_{\Omega_1 \cup \Omega_2} -(\{ \alpha \nabla_\textbf{n} w_h\},[v_h])_{\Gamma}-(\{ \alpha \nabla_\textbf{n} v_h \},[w_h])_{\Gamma}+\lambda ([w_h],[v_h])_{\Gamma},
\end{align}
where  the stabilization parameter $\lambda$ is taken as
\begin{equation}\label{stablization-constant}
\lambda|_T=\widetilde{C}h_T^{-1}\max\{\alpha_1,\alpha_2\},
\end{equation}
with  $\tilde{C}$ a positive constant. 

Then, by following $\cite{Hansbo2002An}$, the Nitsche-XFEMs for the  
 state equation \eqref{eqstate} and the co-state equation \eqref{eqcostate} are respectively given as follows.

Find  $y^h\in V_h$ such that 
	\begin{align} 
	a_h(y^h,v_h)= (u+f,v_h)+( k_2 g,v_{h1} )_{\Gamma}+( k_1 g,v_{h2})_{\Gamma}, 
	\quad \forall v_h \in V_h. 	\label{eqnfem}
	\end{align}
	
Find  $p^h\in V_h$ such that 
\begin{equation} \label{eqaucostate}
		a_h(v_h,p^h)=(y-y_d,v_h), \quad \forall v_h \in V_h.
	\end{equation}
	 
%
\begin{remark}
Note that the  bilinear form  $a_h(\cdot,\cdot)$   corresponds to the symmetric interior penalty Galerkin (SIPG) method \cite{Arnold1982, Wheeler1978}.  

%


\end{remark}

	 \begin{remark}
	 In the stabilization term $\lambda ([w_h],[v_h])_{\Gamma}$ of $a_h(w_h,v_h)$ with $\lambda|_T=\widetilde{C}h_T^{-1}\max\{\alpha_1,\alpha_2\}$, the positive constant $\widetilde{C}$ is required to be ``sufficiently large" to ensure the coercivity of $a_h(\cdot,\cdot)$ (cf. \eqref{coercivity-discreteBilinear}).
	\end{remark}
	
	\begin{remark}
In \cite{Wang2016},   a ``parameter-friendly" DG-XFE scheme was proposed 
for  the following type of interface problem: 
	\begin{equation}\label{eqstrongstate}
\left\{
\begin{array}{rll}
& -\nabla\cdot(\alpha(x)\nabla w)=f & \text{ in }\Omega, \\
& w=0 & \text{ on }\partial\Omega, \\
& [w]=g_D,\ [\alpha\nabla_n w ]=g_{N} & \text{ on }\Gamma,\\
\end{array}
\right.
\end{equation}
where the interface $\Gamma$ is assumed to be  $C^2(\Omega)$-smooth. 
Let $p$ be any given positive integer,  and set 
$$\tilde V_h:=\{ v_h\in H^1(\Omega): \ v_h|_T\in \mathcal{P}_p(T), \forall T\in  \mathcal{T}_h\},$$
 $$W_h:= \chi_1 \tilde V_h+  \chi_2 \tilde V_h,$$
where $ \mathcal{P}_p(T)$ denotes the set of polynomials of degree no more than $p$, and $\chi_m$ is the characteristic function of $\Omega_m$ for $m=1,2$. Then 
the   DG-XFE is formulated as follows: find $w_h\in W_h$ such that
\begin{align*}
		a^*_h(w_h,v_h)=&(f,v_h)+( k_2 {g_N},v_{h1} )_{\Gamma}+( k_1 {g_N},v_{h2})_{\Gamma}-\\ &(g_D,\{\alpha\nabla_n v\})_{\Gamma}+(\lambda_1^* g_D,[v_h])_{\Gamma} +\underset{T\in \mathcal{T}_h^{\Gamma}}  {\sum} (\lambda^*_2 \alpha r_e([g_D]),r_e([v_h])), \quad \forall v_h \in W_h.
\end{align*}
Here 
\begin{align*}
	a^*_h(w_h,v_h)&:=( \alpha \nabla w_h,\nabla v_h)_{\Omega_1 \cup \Omega_2} -(\{ \alpha \nabla_\textbf{n} w_h\},[v_h])_{\Gamma}-(\{ \alpha \nabla_\textbf{n} v_h \},[w_h])_{\Gamma}+ \\ &\lambda^* ([w_h],[v_h])_{\Gamma} +\underset{T\in \mathcal{T}_h^{\Gamma}}  {\sum} (\eta \alpha r_e([w_h]),r_e([v_h])),
\end{align*}
 and, for any  $e=T \cap \Gamma$ with $T\in \mathcal{T}_h^{\Gamma}$,  $ r_e:L^2(e)^d \rightarrow Z_T$ is a lifting operator  given by 
\[ \int_{T} r_e(q) \cdot \alpha z_h= - \int_{e} q \cdot \{ \alpha z_h \}  \quad \forall z_h\in Z_T, \]
where 
\[Z_T=\{ z_h \in L^2(\Omega)^d: z_h|_{T_m} \in P_p(T_m)^d, \ z_{h}|_{\Omega\setminus T}=0 \}.\]
As shown in \cite{Wang2016},   the introduction of the 
  penalization term  based on the lifting operator $r_e$  locally along the interface  guarantees the coercivity  of $a^*_h(\cdot,\cdot)$   as long as the stabilization parameters $\lambda^*|_T \geq h_T^{-1}$ and $\eta\geq 2$. 

	\end{remark}

		Let us introduce a mesh-dependent norm $|||\cdot|||$ on $H^{3/2}(\Omega_1\cup\Omega_2)$: 
	\begin{equation}\label{|||-norm}
	||| v ||| ^2:=\|\nabla v\|^2_{0,{\Omega_1 \cup \Omega_2}}+\|\{ \nabla_\textbf{n} v \}\|^2_{-1/2,h,\Gamma}+\|[v] \|^2_{1/2,h,\Gamma}, \quad \forall v\in H^{3/2}(\Omega_1\cup\Omega_2),
	\end{equation}
where	$$\|v\|^2_{1/2,h,\Gamma}:= \underset{T \in \mathcal{T}_h^{\Gamma}}  {\sum} h_T^{-1} \|v\|^2_{0,\Gamma_T}, \quad \|v\|^2_{-1/2,h,\Gamma}:= \underset{T \in \mathcal{T}_h^{\Gamma}}  {\sum} h_T \|v\|^2_{0,\Gamma_T}.$$
It is easy to see that $|||\cdot|||$ is a norm on $V^h$ with
\begin{equation}\label{poincare-inequality}
||v_h||_{0,\Omega}\lesssim |v_h|_{1,\Omega_1\cup\Omega_2} \leq |||v_h|||, \quad \forall v_h\in V^h.
\end{equation}

Under the  assumptions {\bf (A1)}-{\bf (A2)},     the following  boundedness and coerciveness results hold (cf. \cite{Arnold1982,Hansbo2002An}):
\begin{equation} \label{boundness-discreteBilinear}
a_h(w,v)\lesssim |||w|||\ |||v|||, \quad \forall w, v\in H^{3/2}(\Omega_1\cup\Omega_2),
\end{equation}
and
\begin{equation}\label{coercivity-discreteBilinear}
a_h(v_h,w_h)\gtrsim |||v_h|||^2, \quad \forall v_h\in V^h
\end{equation}
if $\tilde{C}$ in \eqref{stablization-constant} is   sufficiently large. Hence, the discrete problems \eqref{eqnfem} and  \eqref{eqaucostate}  admit unique solutions $y^h\in V_h$ and $p^h\in V_h$, respectively. In addition, from \cite{Hansbo2002An, Yang2018} we have the following error estimates.

%

\begin{lemma} $ \cite{Hansbo2002An}$
	\label{lepriorestimate}
	Let $y,p \in H^1_0(\Omega)\cap H^{2}(\Omega_1 \cup \Omega_2)$ be the solutions to the weak problems \eqref{eqstate} and \eqref{eqcostate}, respectively. Then it holds
	$$||| y-y^h |||  \lesssim h\|y\|_{2,\Omega_1 \cup \Omega_2},\quad ||| p-p^h |||  \lesssim h\|p\|_{2,\Omega_1 \cup \Omega_2}.$$ 
	$$\|y-y^h\|_0 \lesssim h^2\|y\|_{2,\Omega_1 \cup \Omega_2}, \quad \|p-p^h\|_0 \lesssim h^2\|p\|_{2,\Omega_1 \cup \Omega_2}.$$
\end{lemma}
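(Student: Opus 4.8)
The plan is to combine the coercivity and boundedness of $a_h(\cdot,\cdot)$ recorded in \eqref{coercivity-discreteBilinear} and \eqref{boundness-discreteBilinear} with Galerkin orthogonality and a cut-element interpolation estimate, and then to run an Aubin--Nitsche duality argument for the $L^2$ bound. I will carry out the argument for $y$; the bound for $p$ is obtained by the identical chain with \eqref{eqcostate}, \eqref{eqaucostate} replacing \eqref{eqstate}, \eqref{eqnfem}, and is in fact slightly simpler since the co-state has homogeneous interface jumps.

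\textbf{Step 1 (consistency / Galerkin orthogonality).} First I would check that the exact solution $y$ of \eqref{eqstate}, which satisfies $[y]=0$ and $[\alpha\nabla_\mathbf{n}y]=g$ on $\Gamma$, also solves \eqref{eqnfem} in the sense that $a_h(y,v_h)=(u+f,v_h)+(k_2g,v_{h1})_\Gamma+(k_1g,v_{h2})_\Gamma$ for all $v_h\in V_h$. This follows by integrating $(\alpha\nabla y,\nabla v_h)_{\Omega_1\cup\Omega_2}$ by parts on each $\Omega_m$ and using the elementwise identity $\sum_m(\alpha\nabla_\mathbf{n}y,v_{hm})_{\partial\Omega_m\cap\Gamma}=(\{\alpha\nabla_\mathbf{n}y\},[v_h])_\Gamma+([\alpha\nabla_\mathbf{n}y],k_2v_{h1}+k_1v_{h2})_\Gamma$, together with $[y]=0$, which annihilates the remaining two interface terms in $a_h$. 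Subtracting \eqref{eqnfem} gives
\begin{equation}
a_h(y-y^h,v_h)=0,\qquad\forall\,v_h\in V_h. \nonumber
\end{equation}

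\textbf{Step 2 (energy estimate).} Let $I_hy\in V_h$ be the cut-element interpolant of $y$, built by Sobolev-extending $y|_{\Omega_1}$ and $y|_{\Omega_2}$ across $\Gamma$ to $H^2$-functions on the relevant macro-elements and applying standard nodal interpolation on each side; I claim $|||y-I_hy|||\lesssim h\|y\|_{2,\Omega_1\cup\Omega_2}$. Granting this, set $\eta:=y-I_hy$ and $e_h:=I_hy-y^h\in V_h$. By \eqref{coercivity-discreteBilinear}, Step 1 and \eqref{boundness-discreteBilinear},
$$|||e_h|||^2\lesssim a_h(e_h,e_h)=a_h(y-y^h,e_h)-a_h(\eta,e_h)=-a_h(\eta,e_h)\lesssim|||\eta|||\,|||e_h|||,$$
hence $|||e_h|||\lesssim|||\eta|||$ and $|||y-y^h|||\le|||\eta|||+|||e_h|||\lesssim|||\eta|||\lesssim h\|y\|_{2,\Omega_1\cup\Omega_2}$.

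\textbf{Step 3 ($L^2$ estimate).} Put $e:=y-y^h$ and let $z\in H_0^1(\Omega)\cap H^2(\Omega_1\cup\Omega_2)$ solve $-\nabla\cdot(\alpha\nabla z)=e$ in $\Omega_1\cup\Omega_2$ with $[z]=0$, $[\alpha\nabla_\mathbf{n}z]=0$ on $\Gamma$; by the regularity in Assumption \ref{A1} (right-hand side $e$, vanishing jump data), $\|z\|_{2,\Omega_1\cup\Omega_2}\lesssim\|e\|_0$. Since $z$ is smooth enough and its interface jumps vanish, the computation of Step 1 gives $a_h(v,z)=(e,v)$ for all $v\in H_0^1(\Omega)+V_h$, so in particular $\|e\|_0^2=a_h(e,z)$. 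Using Galerkin orthogonality with $I_hz\in V_h$, then \eqref{boundness-discreteBilinear}, Step 2 and the interpolation estimate applied to $z$,
$$\|e\|_0^2=a_h(e,z-I_hz)\lesssim|||e|||\,|||z-I_hz|||\lesssim h\|y\|_{2,\Omega_1\cup\Omega_2}\cdot h\|z\|_{2,\Omega_1\cup\Omega_2}\lesssim h^2\|y\|_{2,\Omega_1\cup\Omega_2}\|e\|_0,$$
and dividing by $\|e\|_0$ completes the proof.

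\textbf{Main obstacle.} The crux is the interpolation estimate of Step 2 with a constant independent of how $\Gamma$ meets the mesh: one must control $\|[\eta]\|_{1/2,h,\Gamma}$ and $\|\{\nabla_\mathbf{n}\eta\}\|_{-1/2,h,\Gamma}$ on arbitrarily cut elements via trace inequalities of the type $\|w\|_{0,\Gamma_T}^2\lesssim h_T^{-1}\|w\|_{0,T}^2+h_T\|\nabla w\|_{0,T}^2$ that remain uniform in the cut configuration --- this is exactly where the assumptions \textbf{(A1)}--\textbf{(A2)} enter and is the technical heart of the Nitsche-XFEM analysis. The consistency identity of Step 1, with the Hansbo weights $\{\cdot\}$ and their conjugate, is routine but sign-sensitive. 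Since all of these ingredients are established in \cite{Hansbo2002An}, within the present paper one may simply invoke that reference.
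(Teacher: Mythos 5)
Your proposal is correct and follows exactly the argument of \cite{Hansbo2002An}, which is all the paper itself offers for this lemma (it is stated with a citation and no proof): consistency of the Nitsche form with the Hansbo weights, a C\'ea-type energy estimate resting on the uniform-in-the-cut interpolation and trace inequalities, and an Aubin--Nitsche duality step using the $H^2(\Omega_1\cup\Omega_2)$ regularity of the auxiliary problem with homogeneous jump data. You also correctly locate the technical heart of the matter in the cut-element interpolation estimate, so nothing further is needed.
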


\begin{lemma} $\cite{Yang2018}$ \label{lepriorestimate2}
Let $y,p \in H^1_0(\Omega)\cap H^{3/2}(\Omega_1 \cup \Omega_2)$ be the solutions to the weak problems \eqref{eqstate} and \eqref{eqcostate}, respectively.  Then it holds
$$
|||y-y^h|||\lesssim h^\frac{1}{2} \|y\|_{\frac{3}{2},\Omega_1\cup\Omega_2},\quad
|||p-p^h|||\lesssim h^\frac{1}{2} \|p\|_{\frac{3}{2},\Omega_1\cup\Omega_2},$$
$$\|y-y^h\|_{0,\Omega} \lesssim h \|y\|_{\frac{3}{2},\Omega_1\cup\Omega_2},\quad
\|p-p^h\|_{0,\Omega} \lesssim h \|p\|_{\frac{3}{2},\Omega_1\cup\Omega_2}.$$
\end{lemma}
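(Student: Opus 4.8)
The plan is to run the classical error analysis for Nitsche-type methods in the mesh-dependent norm $|||\cdot|||$: consistency, a Céa-type best-approximation estimate, an interpolation bound adapted to the cut space $V_h$, and then an Aubin--Nitsche duality argument for the $L^2$-estimates. I would carry out the argument only for the state $y$; the co-state $p$ is handled identically, since $a_h(\cdot,\cdot)$ is symmetric and $p$ solves an interface problem of the same type (indeed with homogeneous flux jump).

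First I would prove Galerkin orthogonality $a_h(y-y^h,v_h)=0$ for all $v_h\in V_h$. Since $y\in H^{3/2}(\Omega_1\cup\Omega_2)$ solves \eqref{eqstate} with $[y]=0$ and $[\alpha\nabla_\textbf{n} y]=g$, integrating by parts over $\Omega_1$ and $\Omega_2$ separately and using $-\nabla\cdot(\alpha\nabla y)=f+u$ shows that $a_h(y,v_h)$ equals precisely the right-hand side of the discrete state equation \eqref{eqnfem}; subtracting \eqref{eqnfem} gives the orthogonality. The delicate point is that, at the borderline regularity $H^{3/2}$, the flux term $(\{\alpha\nabla_\textbf{n} y\},[v_h])_\Gamma$ must be interpreted through the equation: $\alpha\nabla y\in H(\mathrm{div};\Omega_i)$ with the prescribed jump, so its normal trace on $\Gamma$ is well defined, and the mesh-weighted norm $\|\cdot\|_{-1/2,h,\Gamma}$ is designed exactly to control it. I would record this as a preliminary lemma asserting that $a_h(\cdot,\cdot)$ and $|||\cdot|||$ are well defined on $V_h+H^{3/2}(\Omega_1\cup\Omega_2)$.

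Combining orthogonality with the coercivity \eqref{coercivity-discreteBilinear} and boundedness \eqref{boundness-discreteBilinear} of $a_h$ then yields, by the standard argument, $|||y-y^h|||\lesssim \inf_{v_h\in V_h}|||y-v_h|||$, so everything reduces to bounding $|||y-I_h y|||$ for a suitable $I_h y\in V_h$. I would use the Hansbo--Hansbo construction: Stein-extend each $y|_{\Omega_i}$ to an $H^{3/2}$ function on a neighbourhood of $\overline{\Omega_i}$, apply a Scott--Zhang/Clément quasi-interpolation to each extension, and glue the two pieces across $\Gamma$ into $V_h$. Each of the three contributions to $|||y-I_h y|||^2$ is then estimated: the bulk term $\|\nabla(y-I_h y)\|_{0,\Omega_1\cup\Omega_2}\lesssim h^{1/2}\|y\|_{3/2,\Omega_1\cup\Omega_2}$ by fractional-order approximation; the jump term $\|[y-I_h y]\|_{1/2,h,\Gamma}$ (using $[y]=0$, so $[y-I_h y]=-[I_h y]$ is itself an interpolation error split over the two sides) and the flux term $\|\{\nabla_\textbf{n}(y-I_h y)\}\|_{-1/2,h,\Gamma}$ by combining those approximation bounds with the cut-element trace inequality $\|w\|_{0,\Gamma_T}^2\lesssim h_T^{-1}\|w\|_{0,T}^2+h_T\|\nabla w\|_{0,T}^2$, which under the assumptions (A1)--(A2) holds with a constant independent of how $\Gamma$ cuts $T$. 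This gives $|||y-I_h y|||\lesssim h^{1/2}\|y\|_{3/2,\Omega_1\cup\Omega_2}$, hence $|||y-y^h|||\lesssim h^{1/2}\|y\|_{3/2,\Omega_1\cup\Omega_2}$, and similarly for $p$.

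For the $L^2$-estimate I would use duality. With $e=y-y^h$, let $z\in H^1_0(\Omega)$ solve $a(v,z)=(e,v)$ for all $v\in H^1_0(\Omega)$; this is an interface problem with source $e\in L^2(\Omega)$ and homogeneous jump conditions, so \cref{A1} gives $z\in H^{3/2}(\Omega_1\cup\Omega_2)$ with $\|z\|_{3/2,\Omega_1\cup\Omega_2}\lesssim\|e\|_0$. The same consistency computation as above (now with $[z]=0$ and $[\alpha\nabla_\textbf{n} z]=0$) shows $a_h(v_h,z)=(e,v_h)$ for $v_h\in V_h$ and $a_h(y,z)=(e,y)$, whence $\|e\|_0^2=a_h(e,z)$; subtracting the Nitsche--XFEM approximation $z^h\in V_h$ via orthogonality in the first slot and using boundedness gives $\|e\|_0^2=a_h(e,z-z^h)\lesssim |||e|||\,|||z-z^h|||\lesssim h^{1/2}\|y\|_{3/2,\Omega_1\cup\Omega_2}\cdot h^{1/2}\|e\|_0$, so $\|e\|_0\lesssim h\|y\|_{3/2,\Omega_1\cup\Omega_2}$; the co-state is analogous. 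The main obstacles are entirely on the low-regularity side: rigorously making the interface flux terms of $a_h$ well defined at $H^{3/2}$, and extracting exactly the fractional power $h^{1/2}$ from the cut-element trace inequalities in the interpolation estimate. By contrast, coercivity, boundedness, the Céa step, and the duality skeleton are routine. One should also note that the $L^2$ rate is only $O(h)$, not $O(h^2)$, precisely because the dual problem inherits the same $H^{3/2}$ regularity ceiling.
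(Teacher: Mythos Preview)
The paper does not supply a proof of this lemma; it simply quotes the result from \cite{Yang2018}. Your outline---consistency/Galerkin orthogonality, a C\'ea step from \eqref{boundness-discreteBilinear}--\eqref{coercivity-discreteBilinear}, a Hansbo--Hansbo interpolant built from Stein extensions and a Scott--Zhang/Cl\'ement operator, and an Aubin--Nitsche duality using Assumption~\ref{A1}---is exactly the standard route and is what one expects the cited reference to do.

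There is, however, one technical point that your sketch does not quite close. For the flux contribution $\|\{\nabla_{\mathbf n}(y-I_h y)\}\|_{-1/2,h,\Gamma}$ you propose to use the cut-element trace inequality $\|w\|_{0,\Gamma_T}^2\lesssim h_T^{-1}\|w\|_{0,T}^2+h_T\|\nabla w\|_{0,T}^2$. That inequality needs $w\in H^1(T)$; applying it to $w=\nabla_{\mathbf n}(y-I_h y)$ would require $y\in H^2$ elementwise, whereas here $\nabla y$ is only $H^{1/2}$, precisely the borderline at which the $L^2$-trace on $\Gamma_T$ fails. Your $H(\mathrm{div})$ argument makes $\{\alpha\nabla_{\mathbf n} y\}$ meaningful for the \emph{exact} solution, but it does not by itself deliver the $O(h^{1/2})$ bound on the flux of the \emph{interpolation error}. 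To finish, one typically either (i) proves a genuinely fractional trace/approximation estimate on the extended functions $\tilde y_i\in H^{3/2}$ over the whole element $T$ (exploiting that $|I_h\tilde y_i|_{3/2,T}=0$), or (ii) obtains the $H^{3/2}$ bounds by real interpolation between the $H^2$ estimates of \cref{lepriorestimate} and an $H^1$ endpoint, or (iii) avoids estimating $|||y-I_h y|||$ altogether and bounds $a_h(y-I_h y,w_h)$ directly for $w_h\in V_h$, shifting the trace burden onto the discrete factor. Any of these closes the gap; the remainder of your plan (orthogonality, C\'ea, duality, and the observation that the dual problem inherits only $H^{3/2}$ regularity, whence the $O(h)$ rather than $O(h^2)$ rate in $L^2$) is correct as written.
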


\section{Discretization of optimal control problem}
\subsection{Discrete optimality conditions}

By  following the variational discretization concept in \cite{Hinze05var},  the optimal control problem \eqref{eqobjective}-\eqref{eqconstraint}
is approximated by the following discrete optimal control problem: 
%
\begin{equation} \label{eqdobjective}
\min\limits_{(y_h,u)\in V_h \times U_{ad}} J_h(y_h,u)=\frac{1}{2} \int_{\Omega} (y_h-y_d)^2 dx+\frac{\nu}{2}\int_{\Omega} u^2 dx
\end{equation} 
with 
\begin{equation}\label{eqdstate}
a_h(y_h,v_h)= (u+f,v_h)+( k_2 g,v_{h1} )_{\Gamma}+( k_1 g,v_{h2})_{\Gamma}, \quad \forall v_h \in V_h.
\end{equation}
Similar to Lemma \ref{leoptimalcondition},   the following lemma holds.
\begin{lemma}
	The discrete optimal control problem   \eqref{eqdobjective}-\eqref{eqdstate} has a unique solution, and the solution  $(y_h,p_h,u_h)\in V_h\times V_h\times U_{ad}$ satisfies the following optimality conditions:
\begin{align}
& a(y_h,v_h)=(u_h+f,v_h)+( k_2 g,v_{h1} )_{\Gamma}+( k_1 g,v_{h2})_{\Gamma}, \quad \forall v_h\in V_h,\label{eqdstate2}\\
& a(v_h,p_h)=(y_h-y_d,v_h),\quad \forall v_h\in V_h,\label{eqdcostate}\\
& (p_h+a u_h,v-u_h) \geq 0,\quad \forall v\in U_{ad}.\label{eqdprojection}
\end{align}
\end{lemma}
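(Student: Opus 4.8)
The plan is to mirror the proof of Lemma~\ref{leoptimalcondition}: first eliminate the state to reduce \eqref{eqdobjective}--\eqref{eqdstate} to a problem in the control variable alone, then use convexity to obtain existence, uniqueness and a variational inequality, and finally bring in the discrete co-state to put that inequality in the stated form.

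First I would show that the discrete state equation \eqref{eqdstate} defines a control-to-state map. For fixed $u\in L^2(\Omega)$ the right-hand side of \eqref{eqdstate} is a bounded linear functional on $V_h$, and $a_h(\cdot,\cdot)$ is bounded and coercive on $V_h$ by \eqref{boundness-discreteBilinear}--\eqref{coercivity-discreteBilinear} (with $\widetilde C$ in \eqref{stablization-constant} sufficiently large), so the Lax--Milgram lemma gives a unique $y_h=y_h(u)\in V_h$. This map is affine, $y_h(u)=S_h u+y_{h,f}$, where $S_h:L^2(\Omega)\to V_h$ is the bounded linear operator defined by $a_h(S_h u,v_h)=(u,v_h)$ for all $v_h\in V_h$, and $y_{h,f}\in V_h$ absorbs the $f$- and $g$-data.

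Next I would substitute $y_h(u)$ into $J_h$ to obtain the reduced functional $j_h(u):=J_h(y_h(u),u)=\tfrac12\|S_h u+y_{h,f}-y_d\|_0^2+\tfrac{\nu}{2}\|u\|_0^2$, which is continuous, strictly convex (a convex quadratic in the affine quantity $S_h u$ plus $\tfrac{\nu}{2}\|u\|_0^2$ with $\nu>0$) and coercive on $L^2(\Omega)$. Since $U_{ad}$ is a nonempty, closed, convex subset of $L^2(\Omega)$, standard convex-analysis arguments (as in \cite{Tr2010Optimal}) give a minimizer, and strict convexity makes it unique; call it $u_h$ and set $y_h:=y_h(u_h)$, which is precisely \eqref{eqdstate2}. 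Because $j_h$ is convex and G\^ateaux differentiable, $u_h$ is the minimizer over $U_{ad}$ if and only if $\langle j_h'(u_h),v-u_h\rangle\ge 0$ for all $v\in U_{ad}$, with $\langle j_h'(u_h),v\rangle=(y_h-y_d,S_h v)+\nu(u_h,v)$.

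Finally I would introduce the discrete co-state $p_h\in V_h$ as the unique Lax--Milgram solution of $a_h(w_h,p_h)=(y_h-y_d,w_h)$ for all $w_h\in V_h$, i.e. \eqref{eqdcostate}, and take $w_h=S_h v$ together with the definition of $S_h$ to get $(y_h-y_d,S_h v)=a_h(S_h v,p_h)=(v,p_h)$; the variational inequality then becomes $(p_h+\nu u_h,v-u_h)\ge 0$ for all $v\in U_{ad}$, which is \eqref{eqdprojection}. The converse---that any triple solving \eqref{eqdstate2}--\eqref{eqdprojection} must coincide with $(y_h,p_h,u_h)$---follows from the strict convexity, so the system is equivalent to optimality. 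I expect the only delicate point to be the adjoint ``test-function swap'' $(y_h-y_d,S_h v)=a_h(S_h v,p_h)=(v,p_h)$: it works precisely because $S_h v$ lies in the \emph{same} discrete space $V_h$ used to define $p_h$---the discrete analogue of the continuous computation---and it is the mechanism by which the co-state enters \eqref{eqdprojection}; everything else (the Lax--Milgram applications and the differentiation of a quadratic) is routine.
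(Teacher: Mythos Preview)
Your proposal is correct and follows exactly the standard reduced-functional/adjoint argument from \cite{Tr2010Optimal} that the paper invokes; indeed the paper does not give an explicit proof of this lemma but simply states ``Similar to Lemma~\ref{leoptimalcondition}, the following lemma holds,'' so you are spelling out precisely the intended argument. Your adjoint ``test-function swap'' $(y_h-y_d,S_h v)=a_h(S_h v,p_h)=(v,p_h)$ is clean and does not even require the symmetry of $a_h$, since it uses only the defining relation $a_h(S_h v,w_h)=(v,w_h)$ with $w_h=p_h$.
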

\begin{remark}\label{rem-dis-ph}
Notice that    the discrete optimal control $u_h\in U_{ad}$   is not directly discretized in the objective functional \eqref{eqdobjective}, since $U_{ad}$ is infinite dimensional. However,   the variational inequality \eqref{eqdprojection} means that   $u_h$ is implicitly discretized through the discrete co-state $p_h $ and   the projection $P_{U_{ad}}$  (cf. \eqref{eqprojection2}) with
\begin{equation}\label{eqprojection3}
u_h=P_{U_{ad}}  \left (-\frac{1}{\nu} p_h \right).
\end{equation}
Moreover, if $u_0$ and $u_1$ are well-defined at any  $x\in \Omega$, then \eqref{eqprojection3} is equivalent to
\begin{equation}\label{eqprojection4}
u_h=\min \left  \{u_1,\max \left  \{u_0, -\frac{1}{\nu} p_h \right \} \right \}.
\end{equation}
In particular, if  $U_{ad}=L^2(\Omega)$, then we have
\begin{equation}\label{eqprojection-dis}
u_h=-\frac{1}{\nu} p_h.
\end{equation}
\end{remark}

\subsection{Error estimates}
In this subsection, we   first  show that the errors between $(y,p,u)$ and $(y_h,p_h,u_h)$,   the solutions to the continuous  optimal control problem  \eqref{eqstate}-\eqref{eqprojection} and to  the discrete optimal control problem \eqref{eqdstate2}-\eqref{eqdprojection} respectively,  can be bounded from above by the errors  between $(y,p)$ and $(y^h,p^h)$. Here 
	we recall that $y^h\in V_h$ and $p^h\in V_h$ are the solutions to the Nitsche-XFE schemes \eqref{eqnfem} and \eqref{eqaucostate}, respectively. 
	
%

\begin{theorem}\label{th1}
	Let $(y,p,u)\in H_0^1(\Omega)\times H_0^1(\Omega)\times U_{ad}$ and $(y_h,p_h,u_h)\in V_h\times V_h\times U_{ad}$ be the solutions to the continuous  problem  \eqref{eqstate}-\eqref{eqprojection} and the discrete   problem \eqref{eqdstate2}-\eqref{eqdprojection}, respectively. 
	Then we have 
	\begin{eqnarray}
	\nu^{\frac12} \|u-u_h\|_0+\|y-y_h\|_0 &\lesssim& \|y-y^h\|_0+{\nu^{-\frac12}}\|p-p^h\|_0, \label{t1} \\
	\|p-p_h\|_{0} &\lesssim& \|p-p^h\|_{0} + \|y-y_h\|_{0}, \label{t2} \\
	|||y-y_h||| &\lesssim& |||y-y^h|||+\|u-u_h\|_{0},  \label{t3} \\
	|||p-p_h|||&\lesssim& |||p-p^h|||+\|y-y_h\|_{0}.\label{t4}
	\end{eqnarray}
\end{theorem}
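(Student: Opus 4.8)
The plan is to follow the classical Falk/Rösch-type duality argument for optimal control problems, exploiting the optimality conditions \eqref{eqstate}--\eqref{eqprojection} and \eqref{eqdstate2}--\eqref{eqdprojection} together with the variational inequalities \eqref{eqprojection} and \eqref{eqdprojection}. The key device is to introduce auxiliary functions: let $y^h(u_h)\in V_h$ be the Nitsche-XFE state driven by the \emph{continuous} optimal control $u_h$ in \eqref{eqdstate2}, and similarly an auxiliary co-state. Then $y_h-y^h(u_h)$ and $p_h$ relative to the corresponding auxiliary co-state satisfy discrete equations with right-hand sides controlled by $u-u_h$ and $y-y_h$, so that by the discrete stability \eqref{coercivity-discreteBilinear} (equivalently, the $L^2$ estimates of Lemmas \ref{lepriorestimate}--\ref{lepriorestimate2} applied in the variational-crime sense) one gets $\|y_h-y^h(u_h)\|_0 \lesssim \|u-u_h\|_0$ and likewise for the co-state. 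The triangle inequality then splits $\|y-y_h\|_0$ into a ``consistency'' part $\|y-y^h\|_0$ (which is $\|y-y^h(u)\|_0$) and the control-error-driven part.

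For \eqref{t1}, the heart of the matter is to test \eqref{eqprojection} with $v=u_h\in U_{ad}$ and \eqref{eqdprojection} with $v=u\in U_{ad}$, add the two inequalities, and obtain
\begin{equation*}
\nu\|u-u_h\|_0^2 \le (p_h-p,\,u-u_h).
\end{equation*}
Next I would rewrite the right-hand side using the state and co-state equations: introduce $p(u_h)$, the continuous co-state associated with the state $y(u_h)$ driven by $u_h$, decompose $p_h-p = (p_h-p(u_h)) + (p(u_h)-p)$, and use the adjoint identities to convert $(p(u_h)-p,\,u-u_h)$ into $-(y(u_h)-y,\,y(u_h)-y) = -\|y(u_h)-y\|_0^2 \le 0$, which is the sign-favourable term that makes the argument close. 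The remaining term $(p_h-p(u_h),\,u-u_h)$ is bounded by $\|p_h-p(u_h)\|_0\,\|u-u_h\|_0$, and $\|p_h-p(u_h)\|_0$ is estimated by the discrete error analysis plus $\|y_h-y(u_h)\|_0 \lesssim$ (data error) $+ \|u-u_h\|_0$; absorbing the $\|u-u_h\|_0$ factor via Young's inequality with weight $\nu$ yields \eqref{t1}. The appearance of $\nu^{-1/2}$ on $\|p-p^h\|_0$ and $\nu^{1/2}$ on $\|u-u_h\|_0$ is exactly what Young's inequality produces when balancing $\nu\|u-u_h\|_0^2$ against a cross term $\|p-p^h\|_0\|u-u_h\|_0$.

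Estimate \eqref{t2} is then immediate: $p_h$ solves the discrete adjoint equation with source $y_h-y_d$ while $p^h$ solves it with source $y-y_d$ (in the sense of \eqref{eqaucostate}), so $p_h-p^h$ is the discrete adjoint solution with source $y_h-y$, and the $L^2$ bound of Lemma \ref{lepriorestimate2} (or a direct duality argument using \eqref{coercivity-discreteBilinear}) gives $\|p_h-p^h\|_0 \lesssim \|y-y_h\|_0$; combine with the triangle inequality $\|p-p_h\|_0\le\|p-p^h\|_0+\|p^h-p_h\|_0$. For the energy-norm estimates \eqref{t3}--\eqref{t4} I would argue analogously in $|||\cdot|||$: write $y-y_h=(y-y^h)+(y^h-y_h)$, note $y^h-y_h\in V_h$ solves $a_h(y^h-y_h,v_h)=(u-u_h,v_h)$, apply coercivity \eqref{coercivity-discreteBilinear} and boundedness \eqref{boundness-discreteBilinear} together with the Poincaré-type inequality \eqref{poincare-inequality} to get $|||y^h-y_h|||\lesssim\|u-u_h\|_0$, and similarly $|||p^h-p_h|||\lesssim\|y-y_h\|_0$ using the source $y-y_h$ in the discrete adjoint equation. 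The main obstacle I anticipate is the careful bookkeeping of the several auxiliary problems ($y(u_h)$, $p(u_h)$, $y^h(u_h)$, and the discrete analogue) and ensuring each intermediate $L^2$ bound is available without circularity — in particular that the discrete-state-to-control stability $\|y_h-y^h(u_h)\|_0\lesssim\|u-u_h\|_0$ and its adjoint counterpart follow cleanly from \eqref{coercivity-discreteBilinear} and a standard Aubin--Nitsche duality argument in the cut-element setting, which is where the geometric robustness of the Nitsche-XFEM (uniform constants in \eqref{boundness-discreteBilinear}--\eqref{coercivity-discreteBilinear}) is essential.
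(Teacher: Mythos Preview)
Your treatment of \eqref{t2}--\eqref{t4} is correct and coincides with the paper's: from $a_h(y^h-y_h,v_h)=(u-u_h,v_h)$ and $a_h(v_h,p^h-p_h)=(y-y_h,v_h)$ for $v_h\in V_h$, coercivity \eqref{coercivity-discreteBilinear} together with \eqref{poincare-inequality} gives $\|p^h-p_h\|_0\lesssim|||p^h-p_h|||\lesssim\|y-y_h\|_0$ and $|||y^h-y_h|||\lesssim\|u-u_h\|_0$, and the triangle inequality finishes.

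For \eqref{t1}, however, your route through the \emph{continuous} auxiliary problems $y(u_h),\,p(u_h)$ is both different from the paper's argument and problematic as written. The paper stays entirely on the discrete side: after deriving $\nu\|u-u_h\|_0^2\le(u_h-u,\,p-p_h)$ it splits $p-p_h=(p-p^h)+(p^h-p_h)$ and uses the cross identity
\[
(u_h-u,\,p^h-p_h)=a_h(y_h-y^h,\,p_h-p^h)=(y_h-y,\,y_h-y^h),
\]
which follows immediately from the two discrete error equations you yourself identify. Polarizing $(y_h-y,\,y^h-y_h)\le-\tfrac12\|y-y_h\|_0^2+\tfrac12\|y-y^h\|_0^2$ and applying Young's inequality to the remaining term $(u_h-u,\,p-p^h)$ yields \eqref{t1} in one stroke, with the stated $\nu^{\pm1/2}$ powers and with $\|y-y_h\|_0$ appearing on the left for free.

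Your decomposition $p_h-p=(p_h-p(u_h))+(p(u_h)-p)$ does produce the sign-favourable term $-\|y(u_h)-y\|_0^2$, but this is not $\|y-y_h\|_0^2$, so the state part of \eqref{t1} is not captured directly. More seriously, estimating $\|p_h-p(u_h)\|_0$ (a discrete object minus a continuous one, with \emph{different} sources $y_h-y_d$ versus $y(u_h)-y_d$) in terms of the quantities appearing on the right of \eqref{t1} forces you through chains such as $\|p_h-p(u_h)\|_0\lesssim\|y_h-y\|_0+\|p-p^h\|_0+\|u-u_h\|_0$; after Cauchy--Schwarz the resulting $C\|u-u_h\|_0^2$ term on the right carries a $\nu$-independent constant and cannot be absorbed into $\nu\|u-u_h\|_0^2$ when $\nu$ is small, so the explicit $\nu^{1/2}$--$\nu^{-1/2}$ scaling of \eqref{t1} is lost. (Your remark ``$\|y_h-y(u_h)\|_0\lesssim(\text{data error})+\|u-u_h\|_0$'' already signals this leakage; note also that $y_h$ and $y(u_h)$ share the control $u_h$, so their difference is a pure discretization error and should involve no $\|u-u_h\|_0$ contribution.) The fix is precisely the paper's discrete cross identity, which needs no continuous auxiliary, introduces no spurious $\|u-u_h\|_0$ factors, and delivers both halves of \eqref{t1} simultaneously.
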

\begin{proof}
	First, by \eqref{eqdstate2}-\eqref{eqdcostate} and \eqref{eqnfem}-\eqref{eqaucostate}
	we have
	\begin{align}
		a_h(y_h-y^h,v_h)&=(u_h-u,v_h), \quad \forall v_h \in V_h, \label{eqt1} \\
		a_h(v_h,p_h-p^h)&=(y_h-y,v_h), \quad \forall v_h \in V_h, \label{eqt2}
	\end{align}
	which yield
	\begin{align}
    (y_h-y,y_h-y^h)&=a_h(y_h-y^h,p_h-p^h) 
 	=(u_h-u,p_h-p^h)\label{eqt4}.
	\end{align}
 Take $v=u_h$ in \eqref{eqprojection} and $v=u$ in \eqref{eqdprojection}, we get
	\begin{align*}
	(\nu u+p,u_h-u) &\geq 0, \\
 	(\nu u_h+p_h,u-u_h) &\geq 0.  
	\end{align*}
Adding together these two inequalities implies $$(\nu(u-u_h)+p-p_h,u_h-u) \geq 0,$$
which, together with   \eqref{eqt4}, leads to 
	\begin{align*}
		 \nu\|u-u_h\|^2_0 & \leq (u_h-u,p-p_h) \\
		 &=(u_h-u,p-p^h)+(u_h-u,p^h-p_h) 	\\
		  &=(u_h-u,p-p^h)+(y_h-y,y^h-y_h)  \\
		  &\leq \frac{1}{2}(\nu \|u_h-u\|^2_0+\frac{1}{\nu}\|p-p^h\|^2_0)+(y_h-y,y^h-y_h) \\	
		  &\leq \frac{1}{2}(\nu \|u_h-u\|^2_0+\frac{1}{\nu}\|p-p^h\|^2_0) -\frac{1}{2}\|y-y_h\|_0^2+\frac{1}{2}\|y-y^h\|_0^2.
	\end{align*}
	Consequently,  \eqref{t1} holds. 
	
	Second, let us show \eqref{t2} and \eqref{t3}. From \eqref{poincare-inequality}, \eqref{coercivity-discreteBilinear}
	  and \eqref{eqt2}, we have
\begin{align*}
	|| p_h-p^h||^2_{0}&\lesssim |||p_h-p^h|||^2\\
	&\lesssim a_h(p_h-p^h,p_h-p^h)=(y_h-y, p_h-p^h)\\
	&\lesssim ||y_h-y||_{0}|| p_h-p^h||_{0}\\
	&\lesssim ||y_h-y||_{0}||| p_h-p^h|||,
\end{align*}
which, together with the triangle inequality, yields
\begin{align*}
\|p-p_h\|_0 & \leq \|p-p^h\|_0+\|p^h-p_h\|_0   \lesssim \|p-p^h\|_0+\|y_h-y\|_0 ,\\
|||p-p_h||| & \leq |||p-p^h|||+|||p^h-p_h|||   \lesssim |||p-p^h|||+\|y_h-y\|_0,\\
\end{align*}
i.e.   \eqref{t2} and  \eqref{t3} hold.

Similarly, \eqref{t3}   follows from    \eqref{poincare-inequality}, \eqref{coercivity-discreteBilinear} and \eqref{eqt1}.
%
\end{proof}

 Based on Theorem \ref{th1},  Lemmas \ref{lepriorestimate}-\ref{lepriorestimate2},  and Remarks \ref{rem2.3} and \ref{rem-dis-ph},  we immediately have the following main results of error estimation.

\begin{theorem}\label{th2}
	Let $(y,p,u)\in \left(H_0^1(\Omega)\cap H^s (\Omega_1 \cup \Omega_2)\right)\times \left(H_0^1(\Omega)\cap H^s (\Omega_1 \cup \Omega_2)\right)\times U_{ad}$ ($s=2,3/2$) and $(y_h,p_h,u_h)\in V_h\times V_h\times U_{ad}$ be  the solutions to the continuous problem (\ref{eqstate})-(\ref{eqprojection}) and the discrete problem (\ref{eqdstate2})-(\ref{eqdprojection}), respectively. Then we have, for $s=2$, 
	\begin{eqnarray}
	\|u-u_h\|_{0}+\|y-y_h\|_{0} +\|p-p_h\|_{0} &\lesssim& h^2 (\|y\|_{2,\Omega_1\cup \Omega_2}+\|p\|_{2,\Omega_1\cup \Omega_2}),\label{u0}\\
	|||y-y_h||| + |||p-p_h|||&\lesssim& h (\|y\|_{2,\Omega_1\cup \Omega_2}+\|p\|_{2,\Omega_1\cup \Omega_2}), \label{y00}
	\end{eqnarray}
	and for $s=3/2$,
\begin{eqnarray}
	\|u-u_h\|_{0}+\|y-y_h\|_{0} +\|p-p_h\|_{0} &\lesssim& h  (\|y\|_{\frac32,\Omega_1\cup \Omega_2}+\|p\|_{\frac32,\Omega_1\cup \Omega_2}),\label{u00}\\
	|||y-y_h||| + |||p-p_h|||&\lesssim& h^\frac12 (\|y\|_{\frac32,\Omega_1\cup \Omega_2}+\|p\|_{\frac32,\Omega_1\cup \Omega_2}). \label{y000}
	\end{eqnarray}
	In particular, if $u$ is unconstrained, i.e. $U_{ad}=L^2(\Omega)$, then we further have
		\begin{eqnarray}
	|||u-u_h||| &\lesssim& h^{s-1} (\|y\|_{s,\Omega_1\cup \Omega_2}+\|p\|_{s,\Omega_1\cup \Omega_2}), \quad s=2, 3/2.\label{u000}
\end{eqnarray}

	\begin{remark}
	 In view of the definition of   $ |||\cdot||| $ in \eqref{|||-norm}, the estimates \eqref{y00} and \eqref{y000}  indicate		
	 \begin{eqnarray}
		|y-y_h|_1 + |p-p_h|_1&\lesssim& h^{s-1} (\|y\|_{s,\Omega_1\cup \Omega_2}+\|p\|_{s,\Omega_1\cup \Omega_2}),  \quad s=2, 3/2,
	\end{eqnarray}
	and    \eqref{u000} indicates	
\begin{eqnarray}
|u-u_h|_1 &\lesssim& h^{s-1} (\|y\|_{s,\Omega_1\cup \Omega_2}+\|p\|_{s,\Omega_1\cup \Omega_2}),  \quad s=2, 3/2,
	\end{eqnarray}
	where  $|\cdot|_1:=|\cdot|_{1,\Omega_1\cup\Omega_2}.$
	\end{remark}

%
\end{theorem}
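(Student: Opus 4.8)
The plan is to combine the abstract stability bounds of \cref{th1} with the a priori estimates of \cref{lepriorestimate,lepriorestimate2} applied to the auxiliary discrete solutions $y^h,p^h$, and then close a short system of inequalities. First I would note that under the regularity Assumption \ref{A1} and the remark following \cref{leoptimalcondition}, both $y$ and $p$ lie in $H^1_0(\Omega)\cap H^s(\Omega_1\cup\Omega_2)$ for the relevant $s\in\{2,3/2\}$, so \cref{lepriorestimate} (for $s=2$) or \cref{lepriorestimate2} (for $s=3/2$) give
$$\|y-y^h\|_0+\|p-p^h\|_0\lesssim h^s\big(\|y\|_{s,\Omega_1\cup\Omega_2}+\|p\|_{s,\Omega_1\cup\Omega_2}\big),$$
$$|||y-y^h|||+|||p-p^h|||\lesssim h^{s-1}\big(\|y\|_{s,\Omega_1\cup\Omega_2}+\|p\|_{s,\Omega_1\cup\Omega_2}\big).$$

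Next I would feed these into \cref{th1}. From \eqref{t1}, $\nu^{1/2}\|u-u_h\|_0+\|y-y_h\|_0\lesssim \|y-y^h\|_0+\nu^{-1/2}\|p-p^h\|_0$, which is already of order $h^s$ (with $\nu$ a fixed positive constant absorbed into the generic constant). Then \eqref{t2} gives $\|p-p_h\|_0\lesssim\|p-p^h\|_0+\|y-y_h\|_0\lesssim h^s$, establishing \eqref{u0} and \eqref{u00}. For the energy-norm estimates, \eqref{t3} yields $|||y-y_h|||\lesssim|||y-y^h|||+\|u-u_h\|_0$; the first term is $O(h^{s-1})$ and the second is $O(h^s)\le O(h^{s-1})$, so $|||y-y_h|||\lesssim h^{s-1}$. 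Similarly \eqref{t4} together with the already-proved $\|y-y_h\|_0\lesssim h^s$ gives $|||p-p_h|||\lesssim h^{s-1}$, which is \eqref{y00} and \eqref{y000}.

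Finally, for the unconstrained case $U_{ad}=L^2(\Omega)$, I would use \eqref{eqprojection-cont} and \eqref{eqprojection-dis}, namely $u=-\tfrac1\nu p$ and $u_h=-\tfrac1\nu p_h$, so that $|||u-u_h|||=\tfrac1\nu|||p-p_h|||\lesssim h^{s-1}(\|y\|_{s,\Omega_1\cup\Omega_2}+\|p\|_{s,\Omega_1\cup\Omega_2})$, which is \eqref{u000}. The argument is essentially bookkeeping: the only point requiring a little care is verifying that the regularity hypotheses of \cref{lepriorestimate,lepriorestimate2} indeed hold for the optimal state and co-state (so that one may legitimately invoke those lemmas), and that the projection identities from \cref{rem2.3,rem-dis-ph} transfer the control error estimate from $p$ in the unconstrained case — so I do not expect a genuine obstacle here, the substance of the theorem being already contained in \cref{th1}.
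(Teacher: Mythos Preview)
Your proposal is correct and mirrors the paper exactly: the paper gives no detailed argument for \cref{th2}, stating only that it follows immediately from \cref{th1}, \cref{lepriorestimate,lepriorestimate2}, and \cref{rem2.3,rem-dis-ph}, which is precisely the chain you spell out. One small slip to fix in your write-up: for $s=3/2$ the $L^2$ rate from \cref{lepriorestimate2} is $h$, not $h^{3/2}$, so your unified shorthand ``$h^s$'' for the $L^2$ errors should be $h^{2(s-1)}$ (your energy-norm exponent $h^{s-1}$ is already right).
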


	\section{Numerical results}
	
	We shall provide several 2D numerical examples to verify the performance of the Nitsche-XFEM. Note that the optimal control problem
\eqref{eqobjective}-\eqref{eqstrongstate0} without the constraint \eqref{eqconstraint} is a linear problem, the resultant discrete linear system is easy to solve. However, for the constrained optimal control problem \eqref{eqobjective}-\eqref{eqconstraint}, the corresponding discrete optimal control problem \eqref{eqdobjective}-\eqref{eqdstate} or its equivalent  problem \eqref{eqdstate2}-\eqref{eqdprojection} is a nonlinear system, which we shall apply   the following fixed-point iteration algorithm to solve. 



\paragraph{Algorithm}
\noindent\textbf{Fixed-point iteration}
\begin{enumerate}
	\item Initialize $u^i_h=u^0$; 
	\item Compute $y^i_h\in V_h$ by $a_h(y_h^i,v_h)=(u_h^i,v_h)+(f,v_h)+(k_2g,v_{1,h})_{\Gamma_h}+(k_1g,v_{2,h})_{\Gamma_h}
	,\forall v_h\in V_h$; \label{computeyhi}
	\item Compute $p_h^i\in V_h$ by $a_h(v_h,p_h^i)=(y_h^i-y_d,v_h),\forall v_h\in V_h$;
	\item Set $u_h^{i+1}=\min \left  \{u_1,\max \left  \{u_0, -\frac{1}{\nu} p_h^i \right \} \right \};$
	\item if $|u_h^{i+1}-u_h^i|<\text{Tol}$ or $i+1>\text{MaxIte}$, then output $u_h=u_h^{i+1}$, else $i=i+1$, and go back to Step \ref{computeyhi}.
\end{enumerate}
Here $u^0$ is an initial value, Tol is the tolerance, and MaxIte is the maximal iteration number. 
 Theoretically, this algorithm is convergent when the regularity parameter $\nu$ is large enough (cf. \cite{M2008}). 

In each example,  we choose $\Omega $ to be a square, and use $N\times N$  uniform  meshes with  $2N^2$ triangular elements. 

  \begin{example}		
 		{Segment interface: a case without control constraints}
 		\end{example}	
		
		Consider the    optimal control problem
\eqref{eqobjective}-\eqref{eqstrongstate0} without the constraint \eqref{eqconstraint}. Set  the regulation parameter $\nu=0.01$, the domain $\Omega:=[0,1]\times[0,1]$ (cf. Figure \ref{fgin1}), the  interface
$$\Gamma:=\{(x_1,x_2):x_2=kx_1+b\}\cap \Omega$$ 
with $ k=-\sqrt{3}/3, b=(6+\sqrt{6}-2\sqrt{3})/6$, and
 $$\Omega_1:=\{(x_1,x_2):kx_1+b-x_2>0 \} \cap \Omega , \quad \Omega_2:=\{(x_1,x_2):kx_1+b-x_2<0 \} \cap \Omega.$$
  Take the coefficients $\alpha|_{\Omega_1}=\alpha_1:=1, \ \alpha|_{\Omega_2}=\alpha_2:=100$,
and the control  space $U_{ad}:=L^2(\Omega)$.
  Let $y_d, f, g$ be  such that the optimal triple $(y,p,u)$ of  \eqref{eqstate}-\eqref{eqprojection} is of the  the following form:
		\begin{eqnarray*}
	y(x_1,x_2)&=&
	\begin{cases}
	\frac{(x_2-kx_1-b)cos(x_1x_2)}{2\alpha_1}+(x_2-kx_1-b)^3, & \mbox{ in $ \Omega_1$, } \\
	\frac{(x_2-kx_1-b)cos(x_1x_2)}{2\alpha_2}, &\mbox{ in $ \Omega_2$, }
	\end{cases}\\
	u(x_1,x_2)&=&
	\begin{cases}
	\alpha_2(x_2-kx_1-b)x_1(x_1-1)x_2(x_2-1)sin(x_1x_2),& \mbox{ in $ \Omega_1$, } \\
	\alpha_1(x_2-kx_1-b)x_1(x_1-1)x_2(x_2-1)sin(x_1x_2),&\mbox{ in $ \Omega_2$, }
	\end{cases}\\
	p(x_1,x_2)&=&-\nu u(x_1,x_2).
	\end{eqnarray*}
	
	We compute the discrete schemes  \eqref{eqdstate2}-\eqref{eqdprojection} with the stabilization parameter $\widetilde{C}=10$  (cf. \eqref{stablization-constant}). 
	Tables \ref{Tab-1-eg1}-\ref{Tab-2-eg1} give  numerical  results of  the relative errors between $(y_h,p_h,u_h)$ and $(y,p,u)$ in the $L^2-$norm and the $H^1$-seminorm, respectively. We can see that the Nitsche-XFEM  yields optimal convergence orders, i.e. second order rates of convergence for $|y-y_h|_0$, $|p-p_h|_0$ and $|u-u_h|_0$, and   first order rates of convergence for $|y-y_h|_1$, $|p-p_h|_1$ and $|u-u_h|_1$. This is consistent with our theoretical results in Theorem \ref{th1}.

		\begin{figure}[hbtp] 
		\centering
		\includegraphics[width=8cm]{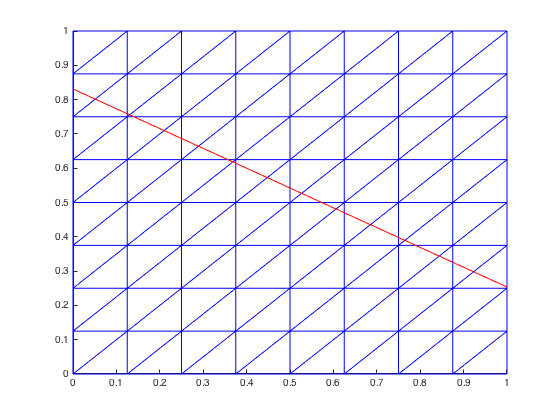}
		\caption{ Segment  interface and $8\times 8$ mesh  for Example 5.1}
		\label{fgin1}
	\end{figure}
	
	\begin{table}[!hbtp]
		\centering
				\caption{Relative errors of Nitsche-XFEM in $L^2$-norm for Example 5.1.}\label{Tab-1-eg1}
\begin{tabular}{|c|c|c|c|c|c|c|}
			\hline
			N & $\frac{\|u-u_h\|_0}{\|u\|_0}$ & order & $\frac{\|y-y_h\|_0}{\|y\|_0}$ & order & $\frac{\|p-p_h\|_0}{\|p\|_0}$& order  \\
			\hline
			16 &3.9941e-02&      &8.7667e-03&      &3.9941e-02&      \\
			32 &9.6399e-03&2.1&2.1955e-04&2.0&9.6399e-03&2.1\\
			64 &2.3780e-03&2.0&5.5005e-04&2.0&2.3780e-03&2.0\\
			128&5.9194e-04&2.0&1.3834e-05&2.0&5.9195e-04&2.0\\
			256&1.4794e-04&2.0&3.5203e-06&2.0&1.4794e-04&2.0\\
			\hline
		\end{tabular}
	\end{table}
	\begin{table}[!hbtp]
		\centering
		\caption{Relative errors of Nitsche-XFEM in $H^1$-seminorm for Example 5.1.}\label{Tab-2-eg1}
	\begin{tabular}{|c|c|c|c|c|c|c|}
			\hline
			N & $\frac{|u-u_h|_1}{|u|_1}$ & order & $\frac{|y-y_h|_1}{|y|_1}$ & order & $\frac{|p-p_h|_1}{|p|_1}$& order  \\
			\hline
			16 &2.0695e-01&       &1.0180e-01&       &2.0695e-01&      \\
			32 &1.0404e-01&1.0&5.0958e-02&1.0&1.0404e-01&1.0\\
			64 &5.2064e-02&1.0&2.5486e-02&1.0&5.2064e-02&1.0\\
			128&2.6035e-02&1.0&1.2744e-02&1.0&2.6035e-02&1.0\\
			256&1.3017e-02&1.0 &6.3722e-03&1.0&1.3017e-02&1.0 \\
			\hline
		\end{tabular}
		\end{table}

%


\begin{example} {Circle interface: a case without control constraints}
\end{example}
	 This example is from  \cite{Zhang2015Immersed},  where  it was used to  test the performance of an IFEM. 
In  the    optimal control problem
\eqref{eqobjective}-\eqref{eqstrongstate0},  take $\nu=0.01$ and $\Omega=[-1,1]\times[-1,1]$.	The interface $\Gamma$ is a circle  centered at $(0,0)$ with radius $r= \frac12$. Set
	 $$\Omega_1:=\{(x_1,x_2):x_1^2+x_2^2< r^2 \}, \quad \Omega_2:=\{(x_1,x_2):x_1^2+x_2^2>r^2 \}\cap \Omega,$$ 
 $\alpha|_{\Omega_1}=\alpha_1:=1, \ \alpha|_{\Omega_2}=\alpha_2:=10$,
and  $U_{ad}:=L^2(\Omega)$.   Let $y_d, f, g$ be  such that the optimal triple $(y,p,u)$ of  \eqref{eqstate}-\eqref{eqprojection} is of the  the following form:
	\begin{eqnarray*}
	y(x_1,x_2)&=&
	\begin{cases}
	\frac{(x_1^2+x_2^2)^{\frac32}}{\alpha_1}, & \mbox{ in $ \Omega_1$ }, \\
	\frac{(x_1^2+x_2^2)^{\frac32}}{\alpha_2} +( \frac{1}{\alpha_1}-\frac{1}{\alpha_2})r^3, &\mbox{ in $ \Omega_2$ },
	\end{cases}\\
		u(x_1,x_2)&=&
	\begin{cases}
	\frac{5(x_1^2+x_2^2-r^2)(x_1^2-1)(x_2^2-1)}{\alpha_1},& \mbox{ in $ \Omega_1$ }, \\
	\frac{5(x_1^2+x_2^2-r^2)(x_1^2-1)(x_2^2-1)}{\alpha_2},  &\mbox{ in $ \Omega_2$ },
	\end{cases}\\
		p(x_1,x_2)&=&-\nu u(x_1,x_2).
	\end{eqnarray*}
	Notice that $g=0$ in this example. 
	
		  In the   schemes  \eqref{eqdstate2}-\eqref{eqdprojection} we take  the stabilization parameter $\widetilde{C}=1000$, and  use the polygonal line $\Gamma_h=\bigcup\limits_{T\in \mathcal T_h} \Gamma_{T,h}$   to replace the exact interface $\Gamma$.
		  Tables \ref{tab5.3}-\ref{tab5.4} give some numerical  results of  the  errors  in the $L^2-$norm and the $H^1$-seminorm, respectively. For comparison we also list the results from \cite{Zhang2015Immersed} obtained by the classical IFEM. We can see that  the Nitsche-XFEM  yields optimal convergence orders for all the $L^2$ and $H^1$ errors. In particular, the convergence rates of Nitsche-XFEM are always full when the mesh is refined, while    the rates of  IFEM may deteriorate, e.g.  the  rate of $|u-u_h|_1$ deteriorates from  $1.01$ at the $32\times32$ mesh  to $0.91$ at the $256\times256$ mesh. In fact, such phenomenon of accuracy deterioration for IFEM has been observed  in \cite{Lins15par} for elliptic interface problems.

\begin{table}[!hbtp]
	\centering
	\caption{$L^2$ errors of Nitsche-XFEM (abbr. NXFEM) and IFEM for Example 5.2.}\label{tab5.3}
	\begin{tabular}{|c|c|c|c|c|c|c|c|}
		\hline
		Method&N & $\|u-u_h\|_0$ & order & $\|y-y_h\|_0$ & order & $\|p-p_h\|_0$& order  \\
		\hline
		&16 &1.1316e-02&       &4.4535e-03&      &1.1316e-04&      \\
		&32 &3.0688e-03&1.88&1.1883e-03&1.91&3.0688e-05&1.88\\
		NXFEM&64 &7.5979e-04&2.01&3.1686e-04&1.91&7.5979e-06&2.01\\
		&128&1.8516e-04&2.04&7.6393e-05&2.05&1.8516e-06&2.04\\
		&256&4.2966e-05&2.11&1.8584e-05&2.04&4.2966e-07&2.11\\
		\hline
		&	16 &1.1889e-02&       &4.6400e-03&      &1.1889e-04&      \\
		&32 &3.1406e-03&1.92&1.2288e-03&1.91&3.1406e-05&1.92\\
		IFEM\cite{Zhang2015Immersed}&64 &7.0663e-04&2.15&3.1438e-04&1.96&7.0663e-06&2.15\\
		&128&1.6334e-04&2.11&8.1934e-05&1.93&1.6334e-06&2.11\\
		&256&3.5894e-05&2.18&2.1650e-05&1.92&3.5894e-07&2.18\\
		\hline
	\end{tabular}
\end{table}

\begin{table}[!hbtp]
	\centering
	\caption{$H^1$ errors of Nitsche-XFEM  and IFEM for Example 5.2.}\label{tab5.4}
	\begin{tabular}{|c|c|c|c|c|c|c|c|}
		\hline
		Method&N & $|u-u_h|_1$ & order & $|y-y_h|_1$ & order & $|p-p_h|_1$& order  \\
		&16 &1.1407e-01&       &1.1311e-01&      &1.1401e-03&      \\
		&32 &5.7015e-02&1.00&5.8796e-02&0.94&5.6926e-04&1.00\\
		NXFEM&64 &2.7869e-02&1.03&2.9448e-02&1.00&2.7932e-04&1.03\\
		&128&1.3830e-02&1.01&1.4800e-02&0.99&1.3852e-04&1.01\\
		&256&6.8465e-03&1.01&7.3659e-03&1.00&6.8465e-05&1.01\\
		\hline
		&		16 &1.0665e-01&       &1.0778e-01&      &1.0665e-03&      \\
		&32 &5.2602e-02&1.01&5.5660e-02&0.95&5.2602e-04&1.01\\
		IFEM\cite{Zhang2015Immersed}&64 &2.7054e-02&0.95&2.9084e-02&0.93&2.7054e-04&0.95\\
		&128&1.4028e-02&0.94&1.5047e-02&0.95&1.4028e-04&0.94\\
		&256&7.4170e-03&0.91&7.9081e-03&0.92&7.4170e-05&0.91\\
		\hline
	\end{tabular}
\end{table}



\begin{example} {Circle Interface: a case with  control constraints } 
\end{example}
	Consider the    optimal control problem
\eqref{eqobjective}-\eqref{eqconstraint} with   $\nu=1$ and $\Omega=[-1,1]\times[-1,1]$ (cf. Figure \ref{figu2}). The
	 interface $\Gamma$ is a circle centered at $(0,0)$ with radius $r= \frac{\sqrt{3}}{4}$. Set 
	$$\Omega_1:=\{(x_1,x_2):x_1^2+x_2^2< r^2 \}, \quad \Omega_2:=\{(x_1,x_2):x_1^2+x_2^2>r^2 \}\cap \Omega,$$
$\alpha|_{\Omega_1}=\alpha_1:=1, \ \alpha|_{\Omega_2}=\alpha_2:=1000$,
and   $U_{ad}:=\{u\in L^2(\Omega):-\frac12 \leq u \leq \frac12 \ \text{ a.e in} \ \Omega \}$.
Let $y_d, f, g$ be  such that the optimal triple $(y,p,u)$ of  \eqref{eqstate}-\eqref{eqprojection} is of the  the following form:
	 \begin{eqnarray*}
	 y(x_1,x_2)&=&
	 \begin{cases}
	 \frac{(x_1^2+x_2^2)^{\frac32}}{\alpha_1}-10(x_1^2+x_2^2-r^2)sin(x_1x_2), & \mbox{ in $ \Omega_1$ } \\
	 \frac{(x_1^2+x_2^2)^{\frac32}}{\alpha_2} +( \frac{1}{\alpha_1}-\frac{1}{\alpha_2})r^3, &\mbox{ in $ \Omega_2$, }
	 \end{cases}\\
%
	 u(x_1,x_2)&=& \min \left \{\frac12,\max \left \{-\frac12,\varphi (x_1,x_2) \right \}  \right \},\\
%
	 p(x_1,x_2)&=&-\nu\varphi(x_1,x_2),
	 \end{eqnarray*}
	where    \begin{eqnarray*}
	 \varphi(x_1,x_2):=
	 \begin{cases}
	 \frac{5(x_1^2+x_2^2-r^2)(x_1^2-1)(x_2^2-1)}{\alpha_1}, & \mbox{ in $ \Omega_1$ } \\
	 \frac{5(x_1^2+x_2^2-r^2)(x_1^2-1)(x_2^2-1)}{\alpha_2},  &\mbox{ in $ \Omega_2$ }.
	 \end{cases}
	 \end{eqnarray*} 
	  In the   schemes  \eqref{eqdstate2}-\eqref{eqdprojection} we take  the stabilization parameter $\widetilde{C}=5$. 
	Tables \ref{Tab-1-eg2}-\ref{Tab-2-eg2} give some numerical  results of  the relative errors  in the $L^2-$norm and the $H^1$-seminorm, respectively. We can see that the NXFEM  yields 
	second order rates of convergence for $|y-y_h|_0$, $|p-p_h|_0$ and $|u-u_h|_0$, and   first order rates of convergence for $|y-y_h|_1$ and $|p-p_h|_1$. 
	This is consistent with   Theorem \ref{th1}.

	In Figures \ref{figcontrol}-\ref{figstate}  we show the exact solutions of the control $u$ and state $p$,  and  the Nitsche-XFEM solutions $u_h$  and $p_h$  at the 32 $\times$ 32   mesh.   Figure \ref{figactiveset} demonstrates  the boundaries of the exact   and the computed active sets. 
	We can see that all the numerical approximations match the exact solutions well.  

	 	\begin{figure}[htbp]
	
	\centering
	\includegraphics[width=7cm]{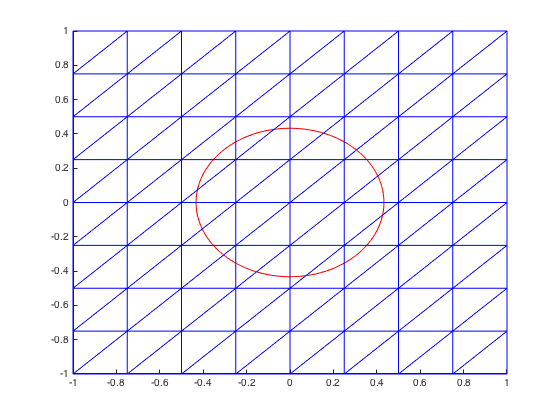}
	\caption{Circle interface and $8\times 8$ mesh for Example 5.3.}
	\label{figu2}
	\label{meshcircle}
\end{figure}

	 \begin{table}[!hbtp]
	 	\centering
				\caption{Relative errors of Nitsche-XFEM in $L^2$-norm for Example 5.3.}\label{Tab-1-eg2}
\begin{tabular}{|c|c|c|c|c|c|c|}
	 		\hline
	 		N & $\frac{\|u-u_h\|_0}{\|u\|_0}$ & order & $\frac{\|y-y_h\|_0}{\|y\|_0}$ & order & $\frac{\|p-p_h\|_0}{\|p\|_0}$& order  \\
	 		\hline
	 		16 &4.4640e-02&       &6.7792e-02&      &5.9076e-02&      \\
	 		\hline
	 		32 &1.7953e-02&1.3&2.3134e-02&1.6&1.8254e-02&1.7\\
	 		\hline
	 		64 &3.9458e-03&2.2&5.7710e-03&2.0&3.9865e-03&2.2\\
	 		\hline
	 		128&7.7806e-04&2.3&1.3023e-03&2.2&8.2130e-04&2.3\\
	 		\hline
	 		256&1.2751e-04&2.6&2.0961e-04&2.6&1.5615e-04&2.4\\
	 		\hline
	 	\end{tabular}
	 \end{table}
	 
	 \begin{table}[!hbtp]
	 	\centering
		\caption{Relative errors of Nitsche-XFEM in $H^1$-seminorm for Example 5.3.}\label{Tab-2-eg2}
	\begin{tabular}{|c|c|c|c|c|}
	 		\hline
	 		N & $\frac{|y-y_h|_1}{|y|_1}$ & order & $\frac{|p-p_h|_1}{|p|_1}$& order  \\
	 		\hline
	 		16 &5.0048e-01&       &2.0831e-01&\\
	 		\hline
	 		32 &2.4468e-01&1.0&1.0421e-01&1.0\\
	 		\hline
	 		64 &1.1515e-01&1.1&4.9146e-02&1.1\\
	 		\hline
	 		128&5.7116e-02&1.0&2.4365e-02&1.0\\
	 		\hline
	 		256&2.6058e-02&1.1&1.1514e-02&1.1\\
	 		\hline
	 	\end{tabular}
		 \end{table}
 
 \begin{figure}[!hbtp]
 	\centering
 	
 	\begin{minipage}{7cm}
 		\includegraphics[width=7cm]{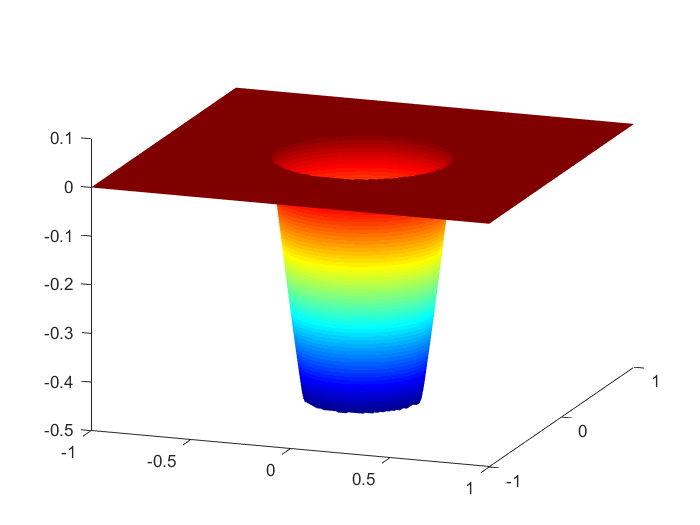}
 	\end{minipage}
 	\begin{minipage}{7cm}
 		\includegraphics[width=7cm]{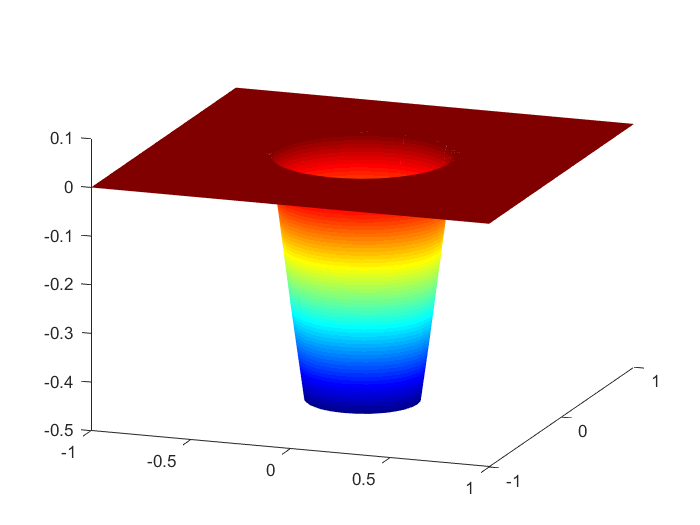}
 	\end{minipage}
 	\caption{ The exact(left) control $u$ and discrete control $u_h$ (right)   for Example 5.3.}
 	\label{figcontrol}
 \end{figure}

 \begin{figure}[!hbtp]
	\centering
	
	\begin{minipage}{7cm}
		\includegraphics[width=7cm]{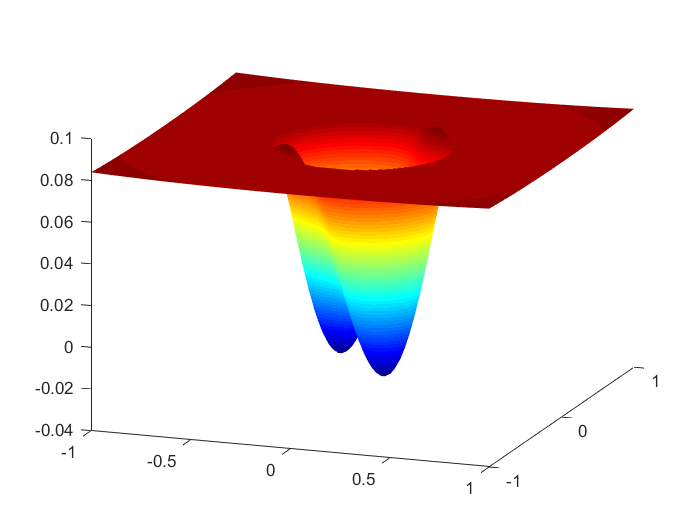}
	\end{minipage}
	\begin{minipage}{7cm}
		\includegraphics[width=7cm]{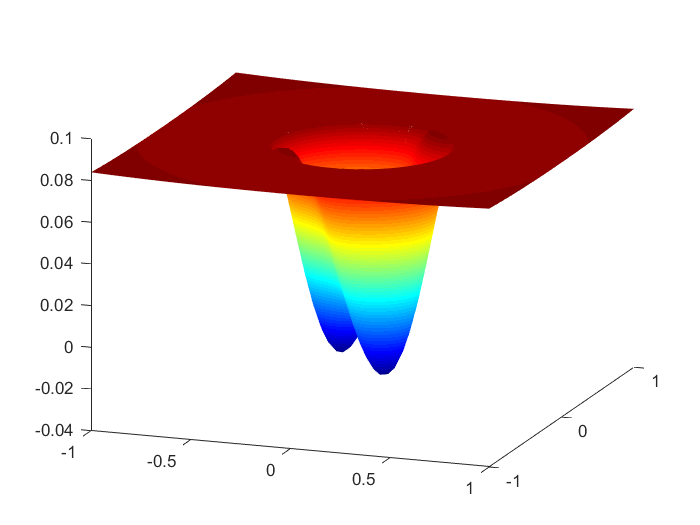}
	\end{minipage}
	\caption{ The exact state $y$ (left) and  discrete state $y_h$ (right)   for Example 5.3.}
	\label{figstate}
\end{figure}

 \begin{figure}[!hbtp]
	\centering
	\includegraphics[width=10cm]{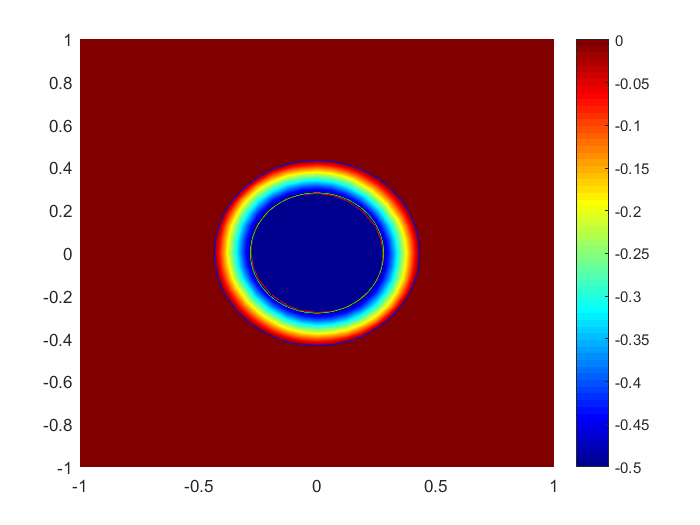}
	\caption{The discrete control $u_h$ for Example 5.3:  The green and red lines are boundaries of the exact and computed active sets, respectively, and the blue line is the interface $\Gamma_h$.}
		\label{figactiveset}
		\end{figure}

\section{Conclusion}
In this paper, the Nitsche eXtended finite element method as well as the variational
discretization concept has been applied to discretize  the distributed optimal control problems of elliptic interface equations. This method does not require interface-fitted meshes, and is  suitable for generic interface conditions. Error analysis and numerical results have demonstrated its optimal convergence  and good performance.  
%
%
%


\end{document}